\documentclass{article}

\date{}

\usepackage{amsmath,amssymb,amsthm}
\usepackage{color}

\usepackage[latin1]{inputenc}
\usepackage{subfigure}
\usepackage{amsmath,amssymb}

\newtheorem{rem}{Remark}

\newtheorem{conjecture}{Conjecture}

\newtheorem{example}{Example}
\newtheorem{proposition}{Proposition}
\newtheorem{theorem}{Theorem}
\newtheorem{corollary}{Corollary}

\def\F{\mathbb F}
\def\A{\mathcal A}
\def\C{\mathbb C}
\def\H{\mathcal H}
\def\Z{\mathbb Z}

\def\O{\mathcal O}
\def\W{\mathcal W}

\author{
Maki Nakasuji
and 
Hiroshi Naruse
}
\title
{Yang-Baxter basis of Hecke algebra and Casselman's problem
(extended abstract)}

\begin{document}
\maketitle


\begin{abstract}

We generalize the definition of   Yang-Baxter basis of type $A$ Hecke algebra introduced by
A.Lascoux, B.Leclerc and J.Y.Thibon (Letters in Math. Phys., 40 (1997), 75--90)
to all the Lie types and prove their duality.
As an application we give a solution to Casselman's problem on Iwahori fixed vectors
of principal series representation of $p$-adic groups.

\end{abstract}

\section{Introduction}

 Yang-Baxter basis of Hecke algebra of type $A$ was defined in the paper of 
 Lascoux-Leclerc-Thibon
 \cite{LLT}. There is also a modified version in \cite{Las}.
 First 
 we generalize the latter version to all the Lie types.
Then we will solve the Casselman's problem on the basis of Iwahori fixed vectors 
using
Yang-Baxter basis and Demazure-Lusztig type operator.
This paper is an extended abstract and the detailed proofs will appear in \cite{NN}.

\section{Generic Hecke algebra}

\subsection{Root system, Weyl group and generic Hecke algebra}

Let ${ \cal R}=(\Lambda, \Lambda^{*},R,R^{*})$ be a (reduced) semisimple root data cf. \cite{Dem}.
More precisely 
$\Lambda\simeq \Z^r$ is a  weight lattice with $\text{ rank }\Lambda=r$.
There is a pairing 
$<\;,\;>: \Lambda^{*}\times \Lambda\to \Z$.
$R\subset \Lambda$ is a root  system with simple roots $\{\alpha_i\}_{1\leq i\leq r}$ and 
positive roots $R^{+}$.
$R^{*}\subset \Lambda^{*}$ is the set of coroots, and
there is a bijection $R\to R^{*}$, 
$\alpha\mapsto \alpha^{*}$. We also denote the coroot $\alpha^{*}=h_\alpha$.
The Weyl group $W$ of $\cal R$ is generated by simple reflections $S=\{s_i\}_{1\leq i\leq r}$.
The action of
$W$ on $\Lambda$ is given by $s_{i}(\lambda)=\lambda-
<\alpha^{*}_{i},\lambda>\alpha_{i}$ for $\lambda\in \Lambda$. 
We define generic Hecke algebra
$H_{t_1,t_2}(W)$  over $\Z[t_1,t_2]$ with two parameters $t_1,t_2$ as follows.
Generators are  $h_i=h_{s_i}$, 
with relations $(h_i-t_1)(h_i-t_2)=0$ for $1\leq i\leq r$ and  the braid relations
$\underbrace{h_i h_j\cdots}_{m_{i,j}}=\underbrace{h_j h_i\cdots}_{m_{i,j}}$
, where $m_{i,j}$ is the order of $s_i s_j$ for $1\leq i<j\leq r$.
We need to extend the coefficients to the quotient field of the group algebra $\Z[\Lambda]$. An element of 
$\Z[\Lambda]$ is denoted as $\displaystyle\sum_{\lambda\in \Lambda} c_\lambda e^\lambda$.
The Weyl group acts on $\Z[\Lambda]$ by $w (e^\lambda)=e^{w\lambda}$.
We extend the coefficient ring $\Z[t_1,t_2]$ of $H_{t_1,t_2}(W)$ to
$$Q_{t_1,t_2}(\Lambda):=\Z[t_1,t_2]\otimes Q(\Z[\Lambda])$$
where $Q(\Z[\Lambda])$ is the quotient field of $\Z[\Lambda]$.
$$H^{Q(\Lambda)}_{t_1,t_2}(W):=Q_{t_1,t_2}(\Lambda)\otimes_{\Z[t_1,t_2]} H_{t_1,t_2}(W).$$
For $w\in W$, an expression  of $w=s_{i_1} s_{i_2}\cdots s_{i_\ell}$ with minimal number of  generators $s_{i_k}\in S$ is called  a reduced
expression in which case we write $\ell(w)=\ell$ and call it  the length of $w$.
Then $h_w=h_{i_1}h_{i_2}\cdots h_{i_\ell}$ is well defined and
$\{h_w \}_{w\in W}$ forms a $Q_{t_1,t_2}(\Lambda)$-basis of $H^{Q(\Lambda)}_{t_1,t_2}(W)$.

\subsection{Yang-Baxter basis and its properties}

Yang-Baxter basis was introduced in the paper
\cite{LLT}
to investigate the relation with
Schubert calculus.
There is also a variant in \cite{Las} for type $A$ case.
We generalize that results to all Lie types.

For $\lambda\in \Lambda$, we define $E(\lambda)=e^{-\lambda}-1$. Then
$E(\lambda+\nu)=E(\lambda)+E(\nu)+E(\lambda)E(\nu)$. In particuar,  if $\lambda\neq 0$,
$\frac{1}{E(\lambda)}+\frac{1}{E(-\lambda)}=-1$.
\begin{proposition} For $\lambda\in \Lambda$, if $\lambda\neq 0$, 
let $h_i(\lambda):=h_i+\frac{t_1+t_2}{E(\lambda)}$.
Then these satisfy the {\bf Yang-Baxter relations}, i.e. 
 if we write $[p,q]:=p\lambda+q\nu$ for fixed 
$\lambda,\nu\in \Lambda$,
the following equations hold.
We assume all appearance of $[p,q] $ is nonzero. 
$$
\begin{array}{lcll}
h_i([1,0]) h_j([0,1])&=&h_j([0,1]) h_i([1,0]) & \text{ if }\; m_{i,j}=2\\[0.1cm]
h_i([1,0]) h_j([1,1])h_i([0,1])&=&
h_j([0,1]) h_i([1,1]) h_j([1,0]) & \text{ if }\; m_{i,j}=3\\[0.1cm]
h_i([1,0]) h_j([1,1])h_i([1,2])h_j([0,1])&=&
h_j([0,1]) h_i([1,2]) h_j([1,1]) h_i([1,0]) & \text{ if }\; m_{i,j}=4\\[0.2cm]
h_i([1,0]) h_j([1,1])h_i([2,3]) 
&&
h_j([0,1]) h_i([1,3]) h_j([1,2]) \\[0.1cm]
\hspace{1cm}\times h_j([1,2]) h_i([1,3]) h_j ([0,1])&=&\hspace{1cm}\times h_i([2,3]) h_j([1,1]) h_i([1,0])
 & \text{ if }\; m_{i,j}=6\\
\end{array}
$$
\end{proposition}
\begin{proof}
We can prove these equations by direct calculations.
\end{proof}

\begin{rem}
In \cite{Che} I. Cherednik 
treated Yang-Baxter relation in more general setting. 
There is also a related work \cite{Kat} by S. Kato and the proof of Theorem 2.4
in \cite{Kat} suggests a uniform way to prove Yang-Baxter relations
without direct calculations.
\end{rem}

We use the Bruhat order $x\leq y$ on elements  $x,y\in W$ (cf.\cite{Hum}).
Following \cite{Las} we
define the Yang-Baxter basis $Y_w$ for $w\in W$ recursively as follows. 
\begin{center}
$Y_e:=1$, $Y_{w}:=Y_{w'}(h_i+\frac{t_1+t_2}{w' E(\alpha_i)})$
if $w=w' s_i>w'$.
\end{center}
Using the Yang-Baxter relation above
 it is easy to see that $Y_w$ does not depend on a reduced expression of $w$.
As the leading term of $Y_w$ with respect to the Bruhat order is $h_w$,  they also form 
a $Q_{t_1,t_2}(\Lambda)$-basis $\{Y_w\}_{w\in W}$ of $H^{Q(\Lambda)}_{t_1,t_2}(W)$.
We are interested in the transition coefficients 
$p(w,v)\text{ and }\tilde{p}(w,v)\in Q_{t_1,t_2}(\Lambda)$
between the two basis
$\{Y_w\}_{w\in W}$ and $\{h_w\}_{w\in W}$ , i.e. 

$$Y_v=\sum_{w\leq v} p(w,v) h_w, \text{ and } h_v=\sum_{w\leq v} \tilde{p}(w,v) Y_w.$$

Take a reduced expression of $v$ e.g. $v=s_{i_1}\cdots s_{i_\ell}$ where $\ell=\ell(v)$ is the length of $v$
(cf. \cite{Hum}).
Then $Y_v$ is expressed as follows.
$$Y_{v}=\prod_{j=1}^{\ell}\left(h_{i_j}+\frac{t_1+t_2}{E(\beta_j)}\right)$$
where
$\beta_j:=s_{i_1}\cdots s_{i_{j-1}}(\alpha_{i_j})$ for $j=1,\ldots,\ell$.
The set $R(v):=\{\beta_1,\ldots, \beta_\ell\}\subset R^{+}$ is independent of the reduced expression of $v$.
The Yang-Baxter basis defined in \cite{LLT} is normalized as follows.
$${Y}^{LLT}_{v}:=\left(\prod_{j=1}^{\ell} \frac{E(\beta_j)}{t_1+t_2}\right) Y_{v}=
\prod_{j=1}^{\ell}\left(\frac{E(\beta_j)}{t_1+t_2}h_{i_j}+1\right).$$

\begin{rem}
The relation to $K$-theory Schubert calculus is as follows.
If we set $t_1=0,t_2=-1$
and replacing $\alpha_i$ by $-\alpha_i$.
Then the coefficient of $h_w$ in ${Y}^{LLT}_v$  is the localization $\psi^w(v)$
at $v$ of the equivariant $K$-theory Schubert class $\psi^w$ (cf. \cite{LSS}).
\end{rem}

\noindent
Let $w_0$ be the longest element in $W$.
Define 
$Q_{t_1,t_2}(\Lambda)$-algebra homomorphism $\Omega:H^{Q(\Lambda)}_{t_1,t_2}\to
 H^{Q(\Lambda)}_{t_1,t_2}$ by $\Omega(h_{w})=h_{w_0 w w_0}$.
Let $\star$ be the ring homomorphism on $\Z[\Lambda]$ induced by $\star(e^{\lambda})=e^{-\lambda}$ and
extend to $Q_{t_1,t_2}(\Lambda)$.

\begin{proposition}
(Lascoux \cite{Las} Lemma 1.8.1  for type $A$ case)\;
For $v\in W$,
$$\Omega(Y_{w_0 v w_0})=\star[{w_0}( Y_{v})]
$$
where $W$ acts only on the coefficients.
\end{proposition}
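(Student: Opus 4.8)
The plan is to induct on $\ell(v)$ using the recursive definition of the Yang-Baxter basis. The base case $v=e$ is immediate, since $Y_e=1$ and $\Omega$, $\star$, $w_0$ all fix $1$. For the inductive step the essential input is the standard fact that conjugation by $w_0$ permutes the simple reflections: writing $i\mapsto i^{*}$ for the involution of the index set determined by $w_0(\alpha_i)=-\alpha_{i^{*}}$, one has $w_0 s_i w_0=s_{i^{*}}$, and since conjugation by $w_0$ preserves length, $\ell(w_0 w w_0)=\ell(w)$ and reduced expressions are carried to reduced expressions. In particular $\Omega(h_i)=h_{w_0 s_i w_0}=h_{i^{*}}$, and hence $\Omega(h_{i^{*}})=h_i$.

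Before running the induction I would fix the bookkeeping about which map touches which part of the algebra, since this is where errors creep in. The homomorphism $\Omega$ is $Q_{t_1,t_2}(\Lambda)$-linear, so it fixes every coefficient and only moves the generators, $h_i\mapsto h_{i^{*}}$. By contrast $w_0$ and $\star$ act only on the coefficients, fixing every $h_w$; on $\Z[\Lambda]$ they satisfy $w\,E(\mu)=E(w\mu)$ and $\star E(\mu)=E(-\mu)$, both of which follow at once from $E(\mu)=e^{-\mu}-1$ and the definitions of the two actions.

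Now suppose $v=v' s_i$ with $\ell(v)=\ell(v')+1$, so that $w_0 v w_0=(w_0 v' w_0)\,s_{i^{*}}$ with length increasing. Applying the recursion and then $\Omega$, and using $\Omega(h_{i^{*}})=h_i$ together with the $Q_{t_1,t_2}(\Lambda)$-linearity of $\Omega$, gives
$$\Omega(Y_{w_0 v w_0})=\Omega(Y_{w_0 v' w_0})\left(h_i+\frac{t_1+t_2}{(w_0 v' w_0)E(\alpha_{i^{*}})}\right).$$
On the other side, the recursion for $Y_v$ together with $v' E(\alpha_i)=E(v'\alpha_i)$ and the coefficient-only actions of $w_0$ and $\star$ gives
$$\star[w_0(Y_v)]=\star[w_0(Y_{v'})]\left(h_i+\frac{t_1+t_2}{E(-w_0 v'\alpha_i)}\right).$$
By the inductive hypothesis the two leading factors agree, so the proof reduces to the coefficient identity $(w_0 v' w_0)E(\alpha_{i^{*}})=E(-w_0 v'\alpha_i)$. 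Expanding the left side by $w\,E(\mu)=E(w\mu)$ turns this into the purely root-theoretic claim $w_0 v' w_0\,\alpha_{i^{*}}=-w_0 v'\alpha_i$, i.e. $w_0\alpha_{i^{*}}=-\alpha_i$, which is exactly the inverse form of the defining relation $w_0\alpha_i=-\alpha_{i^{*}}$.

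The only genuinely delicate point, and the step I expect to absorb most of the care, is keeping the two kinds of action cleanly separated: $\Omega$ reflects the generators through $w_0$ while leaving the coefficients alone, whereas $w_0$ and $\star$ conjugate the coefficients while leaving the generators alone, and it is precisely the interplay of $\Omega(h_{i^{*}})=h_i$ against $w_0\alpha_{i^{*}}=-\alpha_i$ that makes the two descriptions coincide. (An equivalent route is to substitute the product formula $Y_v=\prod_j(h_{i_j}+\tfrac{t_1+t_2}{E(\beta_j)})$ directly and verify $\gamma_j=-w_0\beta_j$ term by term, which amounts to the same root computation.) Once this separation is organized, the remainder is a one-line induction.
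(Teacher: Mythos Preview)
Your proof is correct and follows exactly the approach sketched in the paper: induction on $\ell(v)$ via the right recursion $v=v's>v'$, using that conjugation by $w_0$ permutes simple reflections. The paper gives only the one-line sketch, so your version simply fills in the details the authors omitted; in particular, your reduction of the inductive step to the root identity $w_0\alpha_{i^{*}}=-\alpha_i$ is precisely the computation the paper's sketch leaves to the reader.
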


\begin{proof}
When $\ell(v)>0$ there exists $s\in S$ such that $v=v' s>v'$.
Using the induction assumption on $v'$, we get the formula for $v$.
\end{proof}

Taking the coefficient of $h_w$ in  the above equation, we get
\begin{corollary}
$$p(w_0 w w_0,w_0 v w_0)=\star[w_0 p(w,v)].$$
\end{corollary}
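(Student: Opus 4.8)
The plan is to expand both sides of the identity $\Omega(Y_{w_0 v w_0})=\star[w_0(Y_v)]$ from the preceding Proposition in the basis $\{h_w\}_{w\in W}$ and then match coefficients. Since $\{h_w\}_{w\in W}$ is a $Q_{t_1,t_2}(\Lambda)$-basis of $H^{Q(\Lambda)}_{t_1,t_2}(W)$, equality of the two expansions forces equality of the coefficient of each $h_w$, and that equality is exactly the asserted formula.

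First I would treat the left-hand side. Starting from the defining expansion $Y_{w_0 v w_0}=\sum_{u\leq w_0 v w_0} p(u,w_0 v w_0)\,h_u$, I apply the homomorphism $\Omega$. Because $\Omega$ is $Q_{t_1,t_2}(\Lambda)$-linear it leaves every coefficient untouched and merely sends $h_u\mapsto h_{w_0 u w_0}$, so that $\Omega(Y_{w_0 v w_0})=\sum_{u\leq w_0 v w_0} p(u,w_0 v w_0)\,h_{w_0 u w_0}$. The key manipulation is the substitution $u=w_0 w w_0$, for which $h_{w_0 u w_0}=h_w$ since $w_0^2=e$. Here I would use that conjugation by $w_0$ is a length-preserving automorphism of $W$ which preserves the Bruhat order (it permutes the simple reflections via the diagram automorphism induced by $-w_0$), so the index set transforms cleanly: $u\leq w_0 v w_0$ if and only if $w=w_0 u w_0\leq v$. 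After re-indexing, the left-hand side becomes $\sum_{w\leq v} p(w_0 w w_0,w_0 v w_0)\,h_w$.

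Next I would expand the right-hand side, using the Proposition's convention that $w_0$ and $\star$ act only on the coefficients. From $Y_v=\sum_{w\leq v} p(w,v)\,h_w$ this gives $\star[w_0(Y_v)]=\sum_{w\leq v}\star[w_0\,p(w,v)]\,h_w$, with each basis element $h_w$ left in place. Equating this with the re-indexed left-hand side and comparing the coefficient of $h_w$ yields $p(w_0 w w_0,w_0 v w_0)=\star[w_0\,p(w,v)]$, which is the claim.

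I expect the only genuine obstacle to be the bookkeeping in the re-indexing step: one must check that conjugation by $w_0$ carries the summation range $\{u:u\leq w_0 v w_0\}$ bijectively onto $\{w:w\leq v\}$, which rests on the fact that $x\mapsto w_0 x w_0$ is an order automorphism of $(W,\leq)$. Everything else—the $Q_{t_1,t_2}(\Lambda)$-linearity of $\Omega$, the restriction of $w_0$ and $\star$ to the coefficients on the right-hand side, and the final coefficient comparison—is routine once the substitution is in place.
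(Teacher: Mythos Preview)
Your proof is correct and is exactly the argument the paper has in mind: the paper's one-line proof is ``Taking the coefficient of $h_w$ in the above equation,'' and you have simply spelled this out in full, including the routine re-indexing via the Bruhat-order-preserving conjugation $x\mapsto w_0 x w_0$.
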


\subsection{Inner product and orthogonality}

Define inner product $(\;,\;)^H$ on $H^{Q(\Lambda)}_{t_1,t_2}(W)$
by $(f,g)^H:=\text{ the coefficient of } h_{w_0}$ in $f g^\vee$, where
$g^\vee=\sum_{} c_w h_{w^{-1}}$ if 
$g=\sum_{} c_w h_{w}$.
It is easy to see that $(f h_s, g)^H=(f,g h_s)^H$ for 
$f,g\in H^{Q(\Lambda)}_{t_1,t_2}(W)$ and $s\in S$.
There is an involution $\hat{}:H^{Q(\Lambda)}_{t_1,t_2}\to H^{Q(\Lambda)}_{t_1,t_2}$ defined by 
$\hat{h}_i=h_i-(t_1+t_2), \hat{t}_1=-t_2, \hat{t}_2=-t_1$.
It is easy to see that $\hat{h}_{s} h_{s}= -t_1 t_2$ for $s\in S$.

The following  proposition is due to A.Lascoux for the type $A$ case  \cite{Las}  P.33.

\begin{proposition}
For all $v,w\in W$,
$$(h_v, \hat{h}_{w_0 w})^H=\delta_{v,w}.
$$
\end{proposition}
\begin{proof}
We can use induction on the length $\ell(v)$ of $v$ to prove the equation.

\end{proof}

We have another orthogonality between $Y_v$ and $w_0(Y_{w_0 w})$.
\begin{proposition}(Type $A$ case was due to \cite{LLT} Theorem 5.1 , \cite{Las} Theorem 1.8.4.)
\\
For all $v,w\in W$,
$$(Y_v,  {w_0}(Y_{w_0 w}))^H=\delta_{v,w}.$$
\end{proposition}

\begin{proof}
We use induction on $\ell(v)$ and
use the fact that if $s\in S$ and $u\in W$, then
$Y_{u} h_s= a Y_{us}+b Y_{s}$ for some $a,b\in Q_{t_1,t_2}(\Lambda)$.

\end{proof}

 \subsection{Duality between the transition coefficients}
 
 Recall that we have two transition coefficients 
 $ {p}(w,v) , \tilde{p}(w,v)\in Q_{t_1,t_2}(\Lambda)$ defined by the following expansions.

$$Y_v=\sum_{w\leq v} {p}(w,v) h_w$$
$$h_v=\sum_{w\leq v} \tilde{p}(w,v) Y_w$$
 
Below gives a relation between them.
\begin{theorem}(Lascoux \cite{Las} Corollary 1.8.5 for type $A$ case) For $w,v\in W$,
$$\tilde{p}(w,v)=(-1)^{\ell(v)-\ell(w)}p(v w_0,w w_0).$$
\end{theorem}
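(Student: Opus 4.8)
The plan is to reduce both transition coefficients to values of the inner product $(\,,\,)^H$, transport one into the other using the hat involution together with the duality $\Omega(Y_{w_0vw_0})=\star[w_0(Y_v)]$, and then finish with a homogeneity property of $p$. First I would use the two orthogonality propositions to read the coefficients off as pairings: since $\{\hat h_{w_0 w}\}_w$ is the dual basis of $\{h_w\}_w$ and $\{w_0(Y_{w_0 w})\}_w$ is the dual basis of $\{Y_w\}_w$, pairing each defining expansion against the appropriate dual vector yields
\[
p(w,v)=(Y_v,\hat h_{w_0 w})^H,\qquad \tilde p(w,v)=(h_v,w_0(Y_{w_0 w}))^H .
\]
Applying the same duality to the change of basis $Y_v=\sum_w p(w,v)h_w$ expresses one dual basis through the other; extracting $(Y_u,\hat h_{w_0 v})^H=p(v,u)$ gives the relation
\[
\hat h_{w_0 v}=\sum_{u}p(v,u)\,w_0(Y_{w_0 u}),
\]
which I will call $(\ast)$.

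Next I would establish the lemma $\hat Y_w=\star(Y_w)$, i.e. on a Yang--Baxter basis element the hat involution acts exactly as the coefficient substitution $\star\colon e^\lambda\mapsto e^{-\lambda}$. This follows by induction from the recursive definition of $Y_w$: applying the hat involution to a factor $h_i+\frac{t_1+t_2}{w'E(\alpha_i)}$ produces $h_i-(t_1+t_2)\bigl(1+\frac{1}{w'E(\alpha_i)}\bigr)$, and the identity $\frac{1}{E(\lambda)}+\frac{1}{E(-\lambda)}=-1$ rewrites this as $h_i+\frac{t_1+t_2}{w'E(-\alpha_i)}$, which is precisely the $\star$--image of the original factor.

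The central manipulation is then to apply the hat involution to $(\ast)$. On the left one gets $h_{w_0 v}$. On the right, since the hat involution commutes with the coefficient actions of $w_0$ and $\star$ (they operate on disjoint variables), the lemma gives $\widehat{w_0(Y_{w_0 u})}=w_0(\star Y_{w_0 u})=\star(w_0 Y_{w_0 u})$, and the duality $\Omega(Y_{w_0\,\cdot\,w_0})=\star[w_0(Y_{\,\cdot\,})]$ turns this into $\Omega(Y_{uw_0})$ because $w_0(w_0u)w_0=uw_0$. Using that $\Omega$ is an involution fixing coefficients with $\Omega(h_u)=h_{w_0uw_0}$, applying $\Omega$ to both sides gives $h_{vw_0}=\sum_u\widehat{p(v,u)}\,Y_{uw_0}$; reading off the coefficient of $Y_w$ (set $u=ww_0$) and renaming $v$ as $vw_0$ produces the key identity
\[
\tilde p(w,v)=\widehat{p(vw_0,ww_0)}.
\]

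Finally I would prove the homogeneity $\widehat{p(w,v)}=(-1)^{\ell(v)-\ell(w)}p(w,v)$ by induction on $\ell(v)$, using the explicit recursion for $p$ coming from $Y_v=Y_{v'}\bigl(h_i+\frac{t_1+t_2}{v'E(\alpha_i)}\bigr)$ together with $h_i^2=(t_1+t_2)h_i-t_1t_2$; under the hat involution $t_1+t_2\mapsto-(t_1+t_2)$, $t_1t_2\mapsto t_1t_2$ and $\frac{t_1+t_2}{v'E(\alpha_i)}\mapsto-\frac{t_1+t_2}{v'E(\alpha_i)}$, and the signs balance in each of the two cases $ws_i>w$ and $ws_i<w$. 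Evaluating this at $(vw_0,ww_0)$ and using $\ell(ww_0)-\ell(vw_0)=\ell(v)-\ell(w)$ converts the key identity into the assertion. I expect the main obstacle to be the bookkeeping in the central step: one must simultaneously track the four coefficient--level operations (the hat involution, $\star$, the Weyl action of $w_0$, and $\Omega$), and it is precisely the cancellation $w_0(w_0u)w_0=uw_0$ that converts a two--sided conjugation into the right translation $ww_0$ of the statement rather than the conjugation $w_0ww_0$ appearing in the Corollary.
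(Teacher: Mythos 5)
Your proposal is correct, and at bottom it is the same duality argument as the paper's, but assembled in transposed order and with two steps made explicit that the paper leaves implicit. The paper evaluates the single pairing $(h_v,\,w_0(Y_{w_0 w}))^H$ in two ways: Proposition 4 gives $\tilde p(w,v)$ directly, while the asserted expansion $Y_v=\sum_{w\le v}(-1)^{\ell(v)-\ell(w)}\star[p(w,v)]\,\hat h_w$, twisted by $w_0$ and paired via Proposition 3 together with Corollary 1, gives $(-1)^{\ell(v)-\ell(w)}p(vw_0,ww_0)$. You instead expand $\hat h_{w_0 v}$ in the basis $\{w_0(Y_{w_0 u})\}_u$ and transport that identity through the hat involution and $\Omega$, invoking Proposition 2 directly where the paper uses its Corollary 1; the ingredients are otherwise in exact bijection. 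The genuine added value of your route is that the paper's displayed $\hat h$-expansion of $Y_v$ --- justified in this extended abstract only by the factor identity $h_i+\frac{t_1+t_2}{E(\beta)}=\hat h_i-\frac{t_1+t_2}{E(-\beta)}$ --- is precisely the conjunction of your two lemmas: $\hat Y_w=\star(Y_w)$ and the sign rule $\widehat{p(w,v)}=(-1)^{\ell(v)-\ell(w)}p(w,v)$, and you actually prove both (the latter by induction on the recurrence for $p$, where the check that $t_1t_2$ is hat-invariant while $t_1+t_2$ changes sign is exactly what makes the two cases balance). So your write-up is self-contained at exactly the point where the paper is terse, at the cost of being longer; conversely, the paper's two-way computation of one inner product is shorter because it bundles the $\star$-twist and the sign into a single asserted expansion and never has to invert or transport a basis expansion.
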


\begin{proof}
We will calculate $(h_v, {w_0} (Y_{w_0 w}))^H$ in two ways.
As $h_v=\displaystyle\sum_{w\leq v}\tilde{p}(w,v) Y_w$,
$$(h_v, {w_0} ( Y_{w_0 w}))^H=\tilde{p}(w, v)$$
by the orthogonality on $Y_v$ (Proposition 4).
On the other hand, 
as $h_i+\frac{t_1+t_2}{E(\beta)}=\hat{h}_{i}-\frac{t_1+t_2}{E(-\beta)}$ for $\beta\in R$,
we can expand $Y_v$ in terms of $\hat{h}_w$  as follows.

$$Y_v=\sum_{w\leq v}(-1)^{\ell(v)-\ell(w)} \star[p(w,v)] \hat{h}_w.$$
\noindent
So we have
 $$
 {w_0}( Y_{w_0 w})=\sum_{w_0 v\leq w_0 w} (-1)^{\ell(v)-\ell(w)} w_0[\star p(w_0 v,w_0 w)] \hat{h}_{w_0 v}.
$$
\noindent
Then using the orthogonality on $h_v$ (Proposition 3) and Corollary 1,
  $$(h_v, {w_0}(Y_{w_0 w}))^H=(-1)^{\ell(v)-\ell(w)} w_0[\star p(w_0 v,w_0 w)]
  =(-1)^{\ell(v)-\ell(w)} p(v w_0 ,w w_0 ).$$
 The theorem is proved.

\end{proof}

\subsection{Recurrence relations}

 Here we give some recurrence relations on $p(w,v)$ and $\tilde{p}(w,v)$.
 
 \begin{proposition}(left ${p}$)\;
 For $w\in W$ and $s\in S$, if $sv>v$ then
$$p(w,sv)=\begin{cases} 
 \frac{t_1+t_2}{E(\alpha_s)}  s [p(w,v)] -t_1 t_2 s [p(sw,v)] & \text{ if }\; sw >w\\ 
  (t_1+t_2)(\frac{1}{E(\alpha_s)}+1)s [p(w,v)]+  s[p(sw,v)]& \text{ if }\; sw<w.\\
\end{cases}
 $$
\end{proposition}
 
 \begin{proof} By the definition we have
 $Y_{sv}=Y_{s} s[Y_{v}]$ from which we can deduce the recurrence formula.
 \end{proof}
 
 We note that by this recurrence we can identify $p(w,v)$ as a coefficient of transition between
two bases of the space of Iwahori fixed vectors cf.
Theorem 3 below.

 \begin{proposition} (right ${p}$)\;
For $w\in W$ and $s\in S$, if $vs>v$ then
 $$p(w,vs)=
 \begin{cases}
 \frac{t_1+t_2}{v E(\alpha_s)}  p(w,v)-t_1 t_2 p(ws,v)& \text{ if }\; ws>w\\
 (t_1+t_2)(\frac{1}{v E(\alpha_s)}+1)  p(w,v) + p(ws,v) & \text{ if }\; ws <w.\\
\end{cases}
 $$
 
 \end{proposition}
 \begin{proof}
We can use the equation $Y_{vs}=Y_{v} v[ Y_s ]$ and taking the coefficient of $h_w$,
we get the formula.

 \end{proof}

 \begin{proposition}(left $\tilde{p}$)\;
 For $w\in W$ and $s\in S$, if $sv>v$ then
 $$\tilde{p}(w,sv)=
 \begin{cases}
 -\frac{t_1+t_2}{E(\alpha_s)}\tilde{p}(w,v)+ 
 ({ t_2}+\frac{t_1+t_2}{E(\alpha_s)})({ t_2}+\frac{t_1+t_2}{E(-\alpha_s)} ) 
 s[\tilde{p}(sw,v)] & \text{ if } sw>w\\
 -\frac{t_1+t_2}{E(\alpha_s)}\tilde{p}(w,v) +
 s[\tilde{p}(sw,v)] & \text{ if } sw <w.\\
\end{cases}
 $$
 \end{proposition}
 \begin{proof}
We can prove the recurrence relation using Corollary 2 below.
 \end{proof}

 \begin{proposition}(right $\tilde{p}$)\;
  For $w\in W$ and $s\in S$, if $vs>v$ then
 $$\tilde{p}(w,vs)=
 \begin{cases}
 -\frac{t_1+t_2}{w E(\alpha_s)} \tilde{p}(w,v)+
  ({t_2}+\frac{t_1+t_2}{w E(\alpha_s)})({ t_2}+\frac{t_1+t_2}{w E(-\alpha_s)} ) \tilde{p}(ws,v) & \text{ if } ws>w\\
 -\frac{t_1+t_2}{w E(\alpha_s)} \tilde{p}(w,v)+\tilde{p}(ws,v) & \text{ if } ws <w.\\
\end{cases}
 $$
 \end{proposition}
 \begin{proof}
We can prove the recurrence relation using Corollary 2 below.
 \end{proof}

\section{Kostant-Kumar's twisted group algebra}

Let
$Q^{KK}_{t_1,t_2}(W):=Q_{t_1,t_2}(\Lambda)\# \Z[W]$ be 
the (generic) twisted group algebra of Kostant-Kumar.
Its element is of the form $\displaystyle\sum_{w\in W} f_w \delta_w$
for $f_w\in Q_{t_1,t_2}(\Lambda)$ and the product is
defined by
$$(\sum_{w\in W} f_w \delta_w)(\sum_{u\in W} g_u \delta_u)=
\sum_{w,u\in W} f_w w(g_u) \delta_{w u}.
$$

Define $y_i\in Q^{KK}_{t_1,t_2}(W)$ ($i=1,\ldots,r$) by
$$y_i:=A_i\delta_i+B_i
\text{\; where \;} 
A_i:=\frac{t_1+t_2 e^{-\alpha_i}}{1-e^{\alpha_i}},
B_i:=\frac{t_1+t_2}{1-e^{-\alpha_i}}.$$ 

\begin{proposition} We have the following equations.

(1) $(y_i-t_1)(y_i-t_2)=0$ \text{ for } $i=1,\ldots, r$.

(2) $\underbrace{y_i y_j\cdots}_{m_{i,j}}=\underbrace{y_j  y_i\cdots}_{m_{i,j}}$, where $m_{i,j}$ is the order of $s_i s_j$.
\end{proposition}
\begin{proof}
These equations can be shown by direct calculations.
\end{proof}

By this proposition we can define $y_w:=y_{i_1}\cdots y_{i_\ell}$ 
for a reduced expression $w=s_{i_1}\cdots s_{i_\ell}$.
These $\{y_w\}_{w\in W}$ become a $Q_{t_1,t_2}(\Lambda)$-basis of $Q^{KK}_{t_1,t_2}(W)$.

\begin{rem}
This operator $y_i$ can be seen as a generic Demazure-Lusztig operator.
When $t_1=-1, t_2=q$, it becomes $y^q_{s_i}$ in Kumar's book\cite{Kum}(12.2.E(9)).
We can also set $A_i$ which satisfies 
$$A_i A_{-i}=\frac{(t_1+t_2 e^{-\alpha_i})(t_1+t_2 e^{\alpha_i})}{(1- e^{\alpha_i})(1-e^{-\alpha_i})}.$$
For example, if we set $A_i=\frac{t_1+t_2 e^{\alpha_i}}{1-e^{\alpha_i}}$
and  $t_1=q,t_2=-1$ and replace $\alpha_i$ by $-\alpha_i$, it becomes
Lusztig's $T_{s_i}$ \cite{Lu1}.
If we set $A_i=-\frac{t_1+t_2 e^{\alpha_i}}{1-e^{-\alpha_i}}$
and $t_1=-1, t_2=v$ and replace $\alpha_i$ by $-\alpha_i$,
it becomes $\mathcal T_i$ in \cite{BBL}.

\end{rem}

We can define  a $Q_{t_1,t_2}(\Lambda)$-module isomorphism 
$\Phi:Q^{KK}_{t_1,t_2}(W)\to H^{Q(\Lambda)}_{t_1,t_2}(W)$
 by $\Phi(y_w)= h_w$.
Let $\Delta_{s_i}:=A_i \delta_i$. 
Define $A(w):=\displaystyle\prod_{\beta\in R(w)}\frac{t_1+t_2 e^{-\beta}}{1-e^{\beta}}$
and
$\Delta_w:=A(w)\delta_w$.
Then it becomes that
$\Delta_{s_{i_1}}\cdots \Delta_{s_{i_\ell}}
=A(w) \delta_w=\Delta_w$.
In particular, $\Delta_{s_i}$'s satisfy the braid relations.
We can show below by induction on length $\ell(w)$.

\begin{theorem}

 For $w\in W$, we have
$$\Phi(\Delta_w)=Y_w.$$

\end{theorem}

\begin{proof}
If
$w=s_i$, $\Delta_{s_i}=A_i\delta_i=y_i-B_i$. Therefore 
$\Phi(\Delta_{s_i})=h_i-B_i=h_i+\frac{t_1+t_2}{E(\alpha_i)}=Y_{s_i}$.
If $s_i w>w$, by induction hypothesis we can assume
$\Phi(\Delta_w)=Y_w=\displaystyle\sum_{u\leq w} p(u,w) h_u$.
As $\Phi$ is a $Q_{t_1,t_2}(\Lambda)$-isomorphism,
it follows that
$\Delta_w=\displaystyle\sum_{u\leq w} p(u,w) y_u$.
Then
$\Delta_{s_i w}=\Delta_{s_i}\Delta_w=
A_i\delta_i \displaystyle\sum_{u\leq w} p(u,w) y_u
=\displaystyle\sum_{u\leq w}s_i[ p(u,w)] A_i\delta_i  y_u
=\displaystyle\sum_{u\leq w}s_i[ p(u,w)] (y_i-B_i) y_u
=\displaystyle\sum_{u\leq s_i w} p(u,s_iw) y_u
$.
We used the recurrence relation (Proposition 5) for the last equality.
Therefore $\Phi(\Delta_{s_i w})=\displaystyle\sum_{u\leq s_i w} p(u,s_i w) h_u
=Y_{s_i w}.$ The theorem is proved.
\end{proof}

\begin{corollary}(Explicit formula for $\tilde{p}(w,v)$)

Let $v=s_{i_1}\cdots s_{i_\ell}$ be a reduced expression. Then we have

$$
\tilde{p}(w,v)
=
\frac{1}{A(w)}
\sum_{\epsilon=(\epsilon_1,\cdots,\epsilon_\ell)\in \{0,1\}^\ell, 
s_{i_1}^{\epsilon_1}\cdots s_{i_\ell}^{\epsilon_\ell}=w}
\prod_{j=1}^{\ell}C_j(\epsilon)
$$
where
 for $\epsilon=(\epsilon_1,\cdots,\epsilon_\ell)\in \{0,1\}^\ell$,
$C_j(\epsilon)
:=s_{i_1}^{\epsilon_1} s_{i_2}^{\epsilon_2}\cdots s_{i_{j-1}}^{\epsilon_{j-1}}
(\delta_{\epsilon_j,1}A_{i_j}+\delta_{\epsilon_j,0} B_{i_j})$.

\end{corollary}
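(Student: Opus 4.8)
The plan is to pass to Kostant--Kumar's twisted group algebra through the isomorphism $\Phi$ and to read off $\tilde p(w,v)$ as a structure constant of the product $y_v=y_{i_1}\cdots y_{i_\ell}$. First I would apply $\Phi^{-1}$ to the defining expansion $h_v=\sum_{w\le v}\tilde p(w,v)Y_w$. Since $\Phi$ is a $Q_{t_1,t_2}(\Lambda)$-module isomorphism with $\Phi(y_w)=h_w$ and, by Theorem 2, $\Phi(\Delta_w)=Y_w$, this translates verbatim into
$$y_v=\sum_{w\le v}\tilde p(w,v)\,\Delta_w=\sum_{w\le v}\tilde p(w,v)\,A(w)\,\delta_w,$$
using $\Delta_w=A(w)\delta_w$. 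Hence $\tilde p(w,v)A(w)$ is exactly the coefficient of $\delta_w$ in $y_v$, and it suffices to expand $y_v$ in the $\delta$-basis and extract that coefficient.

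Next I would expand $y_v=\prod_{j=1}^\ell\bigl(A_{i_j}\delta_{i_j}+B_{i_j}\bigr)$ by distributing the product. Each resulting term is indexed by a choice $\epsilon=(\epsilon_1,\dots,\epsilon_\ell)\in\{0,1\}^\ell$: one takes the summand $A_{i_j}\delta_{i_j}$ when $\epsilon_j=1$ and the scalar $B_{i_j}$ when $\epsilon_j=0$. Writing the $j$-th chosen factor as $f_j\delta_{g_j}$ with $g_j=s_{i_j}^{\epsilon_j}$ and $f_j=\delta_{\epsilon_j,1}A_{i_j}+\delta_{\epsilon_j,0}B_{i_j}$, I iterate the twisted multiplication rule $(f\delta_u)(g\delta_{u'})=f\,u(g)\,\delta_{uu'}$ to obtain
$$\prod_{j=1}^\ell f_j\delta_{g_j}=\Bigl(\prod_{j=1}^\ell (g_1\cdots g_{j-1})(f_j)\Bigr)\,\delta_{g_1\cdots g_\ell}.$$
Because $g_1\cdots g_{j-1}=s_{i_1}^{\epsilon_1}\cdots s_{i_{j-1}}^{\epsilon_{j-1}}$, the accumulated coefficient is precisely $\prod_{j=1}^\ell C_j(\epsilon)$, while the attached group element is $\delta_{s_{i_1}^{\epsilon_1}\cdots s_{i_\ell}^{\epsilon_\ell}}$.

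Finally I would collect the terms whose group element equals $w$, that is, those $\epsilon$ with $s_{i_1}^{\epsilon_1}\cdots s_{i_\ell}^{\epsilon_\ell}=w$, giving the coefficient of $\delta_w$ in $y_v$ as $\sum_{\epsilon}\prod_{j=1}^\ell C_j(\epsilon)$; dividing by $A(w)$ yields the stated formula. The only point requiring care is the bookkeeping of the twist: one must track that the prefactor accumulated before the $j$-th factor is the $\epsilon$-truncated partial product $s_{i_1}^{\epsilon_1}\cdots s_{i_{j-1}}^{\epsilon_{j-1}}$ and \emph{not} the full word $s_{i_1}\cdots s_{i_{j-1}}$, since a vanishing $\epsilon_k$ drops the corresponding reflection from the twisting action. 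Once this is handled correctly, everything reduces to the definitions and to Theorem 2.
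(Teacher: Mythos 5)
Your proposal is correct and follows essentially the same route as the paper: pull the expansion $h_v=\sum_{w\le v}\tilde p(w,v)Y_w$ back through $\Phi^{-1}$ using $\Phi(y_w)=h_w$ and Theorem 2, then distribute the product $y_v=\prod_{j}(A_{i_j}\delta_{i_j}+B_{i_j})$ in the twisted group algebra and match coefficients of $\delta_w$. Your careful bookkeeping of the twist (that the prefactor acting on the $j$-th term is the $\epsilon$-truncated word $s_{i_1}^{\epsilon_1}\cdots s_{i_{j-1}}^{\epsilon_{j-1}}$) is exactly the detail the paper leaves implicit in ``by expanding this we get the formula.''
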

\begin{proof}
Taking the inverse image of the map $\Phi$, the equality
$h_v=\sum_{w\leq v} \tilde{p}(w,v) Y_w$ becomes
$$y_v=\sum_{w\leq v} \tilde{p}(w,v) \Delta_w=\sum_{w\leq v} \tilde{p}(w,v) A(w)\delta_w.$$

As $v=s_{i_1}\cdots s_{i_\ell}$ is a reduced expression,
$y_v=y_{s_{i_1}}\cdots y_{s_{i_\ell}}
=(A_{i_i}\delta_{i_1}+B_{i_1}\delta_e)\cdots (A_{i_\ell}\delta_{i_\ell}+B_{i_\ell}\delta_e)$.
By expanding this we get the formula.

\end{proof}

\begin{rem}
Using Theorem 1, we also have a closed form for $p(w,v)$.
We have another conjectural formula  for $p(w,v)$ using 
$\lambda$-chain  cf. \cite{Nar}.

\end{rem}

\begin{example}
Type $A_2$.  We use notation 
$A_{-1}=\star(A_{1})$, $B_{-1}=\star(B_{1})$, 
$B_{12}=\frac{t_1+t_2}{1-e^{-(\alpha_1+\alpha_2)}}$.

When $v=s_1 s_2 s_1$, $w=s_1$, then $\epsilon=(1,0,0),(0,0,1)$
and 

$$\tilde{p}(s_1, s_1 s_2 s_1)=(A_1 B_{12} B_{-1}+B_1 B_2 A_{1})/A_{1}
=B_{12}B_{-1}+B_1 B_2=B_2 B_{12}.$$

When $v=s_1 s_2 s_1$, $w=s_2$, then $\epsilon=(0,1,0)$
and 

$$\tilde{p}(s_2, s_1 s_2 s_1)=(B_1 A_2 B_{12}) /A_{2}=B_1 B_{12}.$$

When $v=s_1 s_2 s_1$, $w=e$, then $\epsilon=(0,0,0),(1,0,1)$
and 

$$\tilde{p}(e, s_1 s_2 s_1)=B_1 B_2 B_1+A_1 B_{12} A_{-1}.$$
 
\end{example}

\section{Casselman's problem}

In his paper \cite{Cas} B. Casselman gave a problem concerning transition coefficients between
two bases in the space of Iwahori fixed vectors of a principal series representation of a $p$-adic group.
We relate the problem with the Yang-Baxter basis and give an answer to the problem.
\subsection{Principal series representations of $p$-adic group and Iwahori fixed vector}
We follow the notations of M.Reeder \cite{Re1,Re2}.
Let $G$ be a connected reductive $p$-adic group over a non-archimedian local field $F$.
For simplicity we restrict to the case of split semisimple $G$.
Associated to $F$, there is the ring of integer $\mathcal O$, the prime ideal $\mathfrak p$
with a generator $\varpi$, and the residue field with
$q=|\mathcal O/\mathfrak p|$ elements.
Let $P$ be a  minimal parabolic subgroup (Borel) of $G$, and
$A$ be the  maximal split torus of $P$ so that $A\simeq (F^{*})^r$ 
where $r$ is the rank of $G$.
 For an unramified quasi-character $\tau$ of  $A$,
 i.e.   a group homomorphism  $\tau:A\to \C^{*}$ which is
 trivial on $A_0=A\cap K$, where 
 $K=G(\mathcal O)$ is a maximal compact subgtoup of $G$.
 Let $T=\C^{*}\otimes X^{*}(A)$
be the complex torus dual to $A$, where $X^{*}(A)$ is the group of rational characters on $A$, i.e. 
$X^{*}(A)=\{\lambda:A\to F^{*}, \text{ algebraic group homomorphism}\}$.
We have a pairing  $<,>:A/A_0\times T\to \C^{*}$ given by
$<a, z\otimes \lambda>=z^{{\rm val}(\lambda(a))}$.
This gives an identification $T\simeq X^{nr}(A)$ of  $T$ with the set of unramified quasi-characters on $A$
(cf. \cite{Bum} Exercise 18,19).

Let $\Delta\subset X^{*}(A)$ be the set of roots of $A$ in $G$, $\Delta^{+}$ be the set of
positive roots corresponding to $P$ 
and $\Sigma\subset \Delta^{+}$ be the set of simple roots
.
For a root $\alpha\in \Delta$, we define $e_\alpha\in X^{*}(T)$ by
$$e_{\alpha}(\tau)=<h_\alpha(\varpi),\tau>$$
for $\tau\in T$ where $h_\alpha: F^{*}\to A$ is the one parameter subgroup (coroot)
corresponding to $\alpha$.

\begin{rem}
As the definition shows, $e_\alpha$ is defined using the coroot $\alpha^{*}=h_\alpha$.
So it should be parametrized by $\alpha^{*}$, but for convenience we follow the notation
of \cite{Re1}.
Later we will identify $e_\alpha (\alpha\in \Delta=R^{*})$ with $e^\alpha  (\alpha\in R=\Delta^{*})$
by the map $*:\Delta\to R$ of root data.
\end{rem}

$W$ acts on right of $X^{nr}(A)$
so that
$\tau^w(a)=\tau(w a w^{-1})$ for $a\in A$, $\tau\in T$ and $w\in W$.
The action of $W$ on $X^{*}(T)$ is given by
$(w e_\alpha)(\tau)=e_{w \alpha}(\tau)=e_{\alpha}(\tau^w)$
for $\alpha \in \Delta$, $\tau\in T$ and $w\in W$.

The principal series representation $I(\tau)$ of $G$ associated to a unramified quasicharacter $\tau$ of $A$ 
is defined as follows.
As a vector space over $\C$ it consists of locally constant functions on $G$ with values in $\C$ which satisfy the
left relative invariance properties with respect to $P$ where $\tau$ is extended to $P$ with trivial value on the unipotent radical $N$
of $P=AN$.
$$I(\tau):={\rm Ind}_{P}^{G} (\tau)=\{f:G\to \C 
\text{ loc. const. function }| f(pg)=\tau \delta^{1/2}(p) f(g) \;
\text{ for } \forall p\in P, \forall g\in G \}.$$
Here $\delta$ is the modulus of $P$.
The action of $G$ on $I(\tau)$ is defined by right translation, i.e.
for $g\in G$ and $f\in I(\tau)$,
$(\pi(g) f)(x)=f(xg)$.

Let 
$B$ be the Iwahori subgroup
which is the inverse image $\pi^{-1}(P(\F_q))$ of the  Borel subgroup $P(\F_q)$ of $G(\F_q)$ by the projection
$\pi:G(\O)\to G(\F_q)$. Then we define $I(\tau)^B$ to be the space of Iwahori fixed vectors in $I(\tau)$,
i.e.
$$I(\tau)^B:=\{f\in I(\tau) \;|\; f(g b)=f(g) \text{ for } \forall b\in B, \forall g\in G\}.$$
This space has a natural basis $\{\varphi^\tau_w\}_{w\in W}$.
$\varphi^\tau_w\in I(\tau)^B$ is supported on $PwB$ and satisfies
$$\varphi^\tau_w (p w b)=\tau \delta^{1/2}(p)\; \text{ for } \forall p\in P, \forall b\in B.$$

\subsection{Intertwiner and Casselman's basis}
From now on we always assume that {$\tau$ is regular} i.e. the stabilizer 
$W_{\tau}=\{w\in W\;|\; \tau^w=\tau\}$
is trivial.
The intertwining operator
$\A^\tau_w: I(\tau)\to I(\tau^w)$ is
defined by
$$\A^\tau_w(f) (g):=
\displaystyle\int_{N_w} f(w n g) dn$$
where
$N_w:=N\cap w^{-1} N_{-} w$, 
with $N_{-}$ being the unipotent radical of opposite parabolic $P_{-}$ 
which
corresponds to the negative roots $\Delta^{-}$. 
The integral is convergent when $|e_\alpha(\tau)| <1$ 
for all $\alpha\in \Delta^{+}$ such that $w\alpha\in \Delta^{-}$
(cf. \cite{Bum} Proposition 63),
and may be meromorphically continued to all $\tau$.
It has the property that for $x,y\in W$ with $\ell(xy)=\ell(x)+\ell(y)$,
then
$$\A^{\tau^x}_y \A^{\tau}_{x}=\A^{\tau}_{xy}.$$

The Casselman's basis $\{f^\tau_w\}_{w\in W}$ of $I(\tau)^B$ is 
defined as follows.
$f^\tau_w\in I(\tau)^B$ and 

$$\A^\tau_y f^\tau_w(1)=\begin{cases}
1& \text{ if } y=w\\
0& \text{ if } y\neq w.\\
\end{cases}
$$

M.Reeder characterizes this using the action of affine Hecke algebra (cf. \cite{Re2} Section 2).
The affine Hecke algebra $\H=\H(G,B)$ is the convolution algebra of
$B$ bi-invariant locally constant functions on $G$ with values in $\C$.
By the theorem of Iwahori-Matsumoto it can be described by generators and relations.
The basis $\{T_w\}_{w\in \widetilde{W}_{aff}}$ consists of 
characteristic functions $T_w:=ch_{BwB}$
of double coset $BwB$.
 Let $\H_{W}$ be the Hecke algebra of the finite Weyl group $W$ generated by the simple reflections 
 $s_\alpha$ for simple roots $\alpha\in \Sigma$.
 As a vector space $\H$ is the tensor product of two subalgebras
 $\H=\Theta\otimes \H_W$.
 The subalgebra $\Theta$ is commutative
and  isomorphic to the coordinate ring of the complex torus $T$
with a basis $\{\theta_a\;|\;a\in A/A_0\}$, where
 $\theta_a$ is defined as follows  (cf. \cite{Lu2}).
 Define $A^{-}:=\{a\in A\;|\; |\alpha(a)|_F\leq 1 \;\forall \alpha\in \Sigma\}$.
 For $a\in A$, choose $a_1,a_2\in A^{-}$ such that $a=a_1 a_2^{-1}$. Then
 $\theta_a=q^{(\ell(a_1)-\ell(a_2))/2} T_{a_1} T_{a_2}^{-1}$
 where for $x\in G$, $\ell(x)$ is the length function defined by $q^{\ell(x)}=[BxB:B]$
 and $T_x\in\H$ is the characteristic function of $BxB$.

By Lemma (4.1) of \cite{Re1}, there exists a unique
$f^{\tau}_w\in I(\tau)_{w}\cap I(\tau)^B$ for each $w\in W$
such that 

$(1) f^\tau_w(w)=1$ and 

$(2) \pi(\theta_a) f^\tau_w=\tau^{w}(a) f^\tau_w$ for all $a\in A$.

\noindent
Here $I(\tau)_w:=\{f\in I(\tau)\;|\; \text{ support of  $f$ is contained in } \bigcup_{x\geq w}PxP\}$.



\vspace{0.5cm}

\subsection{Transition coefficients}

Let
$$f^\tau_w =\sum_{w\leq v} a_{w,v}(\tau) \varphi^\tau_{v}$$
and
$$\varphi^\tau_w =\sum_{w\leq v} b_{w,v}(\tau) f^\tau_{v}.$$

\noindent
The Casselman's problem is to find an explicit formula for $a_{w,v}(\tau) $ and $b_{w,v}(\tau)$.

To relate the results in Sections 2 and 3 with the Casselman's problem,
in this subsection we specialize the parameters $t_1=-q^{-1}$, $t_2=1$
and take tensor product with the complex field $\C$.
For example, the Yang-Baxter basis
$Y_w$ will become 
a $Q_{t_1,t_2}(\Lambda)\otimes \C$ basis
 in $H^{Q(\Lambda)}_{t_1,t_2}(W)_{\C}=H^{Q(\Lambda)}_{t_1,t_2}(W)\otimes \C$.
The generic Demazure-Lusztig operator defined in Section 3 will become
$$y_i:=A_i\delta_i+B_i
\text{\; where \;} 
A_i:=\frac{-q^{-1}+ e^{-\alpha_i}}{1-e^{\alpha_i}},
B_i:=\frac{-q^{-1}+1}{1-e^{-\alpha_i}}.$$ 

\noindent
Then $(y_i+q^{-1})(y_i-1)=0$.

\begin{theorem} We identify $e^\alpha$ with $e_{\alpha}$ (cf. Remark 4). Then,

$$a_{w,v}(\tau)=\tilde{p}(w,v)(\tau)|_{t_1=-q^{-1},t_2=1}$$
$$b_{w,v}(\tau)={p}(w,v)(\tau)|_{t_1=-q^{-1},t_2=1}.$$
\end{theorem}
\begin{proof}
$b_{w,v}$'s satisfy the same recurrence relation (Proposition 5 with $t_1=-q^{-1},t_2=1$) 
as $p(w, v)$'s (cf. \cite{Re2} Proposition (2.2)).
The initial condition  $b_{w,w}=p(w,w)=1$ leads to the second equation.
The first equation then also holds. Note that  the $b_{y,w}$ in \cite{Re2} is our $b_{w,y}$.

\end{proof}

\begin{rem}
There is also a direct proof that does not use recurrence relation
cf. \cite{NN}.

 \end{rem}
 
 \begin{corollary}
 We have a closed formula for $a_{w,v}(\tau)$ and $b_{w,v}(\tau)$ by
 Corollary 2 and Theorem1.
 \end{corollary}

 \begin{corollary} For $v\in W$, we have
 $$\sum_{w\leq v} b_{w,v}=
 \prod_{\beta\in R(v)} 
 \frac{1-q^{-1} e^\beta}{1-e^\beta},$$
 and
  $$\sum_{w\leq v} b_{w,v} (-q^{-1})^{\ell(w)}=
  \prod_{\beta\in R(v)} \frac{1-q^{-1}}{1-e^\beta}.$$
 \end{corollary}
 \begin{proof}
 When $t_1=-q^{-1},t_2=1$, 
 we can specialize $h_i$ to 1 and we get the first equation
 from the definition of $Y_v$, since $1+\frac{(1-q^{-1})e^\beta}{1-e^\beta}= \frac{1-q^{-1} e^\beta}{1-e^\beta}$.
 We can also specialize $h_i$ to $-q^{-1}$ and
 $-q^{-1}+\frac{(1-q^{-1})e^\beta}{1-e^\beta}= \frac{1-q^{-1}}{1-e^\beta}$ gives the second equation.
 \end{proof}
 
 \begin{rem}
 The left hand side of the first equation in Corollary 4 is $m(e,v^{-1})$ in \cite{BN}.
 So this gives another proof of Theorem 1.4 in \cite{BN}.
 \end{rem}

\subsection{Whittaker function}

 M.Reeder \cite{Re2} specified a formula for the Whittaker function
  $\mathcal W_\tau(f^\tau_w)$ and
using $b_{w,v}$, he got a formula for $\mathcal W_\tau(\varphi^\tau_w)$.
 For $a\in A$,
let $\lambda_a\in X^{*}(T)$ be 
$$\lambda_a(z\otimes \mu)=z^{val(\mu(a))} 
\text{ for } z\in \C^{*}, \mu\in X^{*}(A). $$

Formally the result of M.Reeder \cite{Re2}  Corollary $(3.2)$ 
 is written as follows. For $w\in  W$ and $a\in A^{-}$,

$$\W(\varphi_w)(a)=
\delta^{1/2}(a) \sum_{w\leq y}
b_{w,y}\; y \left[ \lambda_a \prod_{\beta\in R^{+}-R(y)} \frac{1-q^{-1} e^{\beta}}{1-e^{-\beta}} \right]\in \C[T].$$
 Then using Corollary 3, we have an explicit formula of $\W(\varphi_w)(a)$.

\subsection{Relation with Bump-Nakasuji's work}
Now we explain the relation between this paper and Bump-Nakasuji \cite{BN}.
First of all, the notational conventions are slightly different.
Especially in the published \cite{BN} the natural base and intertwiner are differently parametrized.
 The natural basis $\phi_w$ in \cite{BN} is our $\varphi_{w^{-1}}$.
The intertwiner $M_{w}$ in \cite{BN} is our $\mathcal A_{w^{-1}}$ so that if $\ell(w_1 w_2)=\ell(w_1)+\ell(w_2)$,
$M_{w_1 w_2}=M_{w_1}\circ M_{w_2}$ while $\mathcal A_{w_1 w_2}=\mathcal A_{w_2} \mathcal A_{w_1}$.

In the paper \cite{BN}, another basis $\{\psi_w\}_{w\in W}$ for 
 the space $I(\tau)^B$ was defined and comparerd with the Casselman's
basis. They defined $\psi_w:=\sum_{v\geq w} \varphi_v$ and
expand this as 
$\psi_w=\sum_{v\geq w} m(w,v) f_v$ and  conversely
$f_w=\sum_{v\geq w} \tilde{m}(w,v) \psi_v$
. 
They observed that the transition coefficients $m(w,v)$ and $\tilde{m}(w,v)$
factor under certain condition.
Let 
$S(w,v):=\{ \alpha\in R^{+} |
w\leq s_\alpha v<v\}$
and
$S'(w,v):=\{ \alpha\in R^{+}  |
w< s_\alpha w\leq v\}$.
Then the statements of the conjectures are as follows.
\begin{conjecture}(\cite{BN} Conjecture 1.2)
Assume that the root system $R$ is simply-laced. 
Suppose $w\leq v$ and  $|S(w,v)|=\ell(v)-\ell(w)$, then
$$m(w,v)=
\displaystyle\prod_{\alpha\in S(w,v)}
\frac{1-q^{-1} z^\alpha}{1-z^\alpha}.$$
\end{conjecture}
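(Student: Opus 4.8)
The plan is to reduce the conjecture to a statement purely about the transition coefficients of Sections~2--3. Substituting the expansion $\varphi_u=\sum_{u\le y}b_{u,y}f_y$ into $\psi_w=\sum_{u\ge w}\varphi_u$ and comparing coefficients of $f_v$ in $\psi_w=\sum_{v\ge w}m(w,v)f_v$ gives, after the index translation recorded at the beginning of this subsection,
\[
m(w,v)=\sum_{w\le u\le v}b_{u,v}.
\]
By Theorem~3 we have $b_{u,v}=p(u,v)|_{t_1=-q^{-1},t_2=1}$, so the conjecture becomes the assertion that the partial sum $\sum_{w\le u\le v}p(u,v)$ factors as $\prod_{\alpha\in S(w,v)}\frac{1-q^{-1}z^\alpha}{1-z^\alpha}$ whenever $R$ is simply-laced and $|S(w,v)|=\ell(v)-\ell(w)$.

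First I would dispose of the base case $w=e$. Since $S(e,v)=\{\alpha\in R^{+}\mid s_\alpha v<v\}=R(v)$ and $|S(e,v)|=\ell(v)$, the hypothesis holds automatically and the desired identity is exactly the first formula of Corollary~4 (this is the reformulation noted in Remark~6). This strongly suggests attacking the general statement by induction on $\ell(v)-\ell(w)$, removing one root of $S(w,v)$ at each step.

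For the inductive step I would feed the recurrences of Propositions~5 and~6 into the partial sum. Choosing a simple $s$ with $vs<v$ and $vs\ge w$, so that $\beta:=-v\alpha_s\in S(w,v)$ and $vs=s_\beta v$ is a coatom of $[w,v]$, Proposition~6 expresses each $p(u,v)$ through $p(u,vs)$ and $p(us,vs)$, with the case split governed by $us\gtrless u$. Summing over $w\le u\le v$ and pairing $u$ with $us$ should collapse the right-hand side into partial sums $\sum_{w\le u'\le vs}p(u',vs)=m(w,vs)$, to which the induction hypothesis applies; simultaneously one checks that the target product obeys the matching recurrence, splitting off the factor $\frac{1-q^{-1}z^\beta}{1-z^\beta}$ indexed by $\beta$. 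The simply-laced hypothesis enters here to keep the reflection identity $vs=s_\beta v$ and the behaviour of the factors $E(\beta)$ uniform across a single root length.

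The hard part will be the combinatorial control of $S(w,v)$ under this reduction. The numerical hypothesis $|S(w,v)|=\ell(v)-\ell(w)$ is exactly what forces the interval $[w,v]$ to be extremal: its coatoms all have length $\ell(v)-1$ and are in bijection with $S(w,v)$, and $[w,v]$ is isomorphic to a Boolean lattice of rank $\ell(v)-\ell(w)$. I would need (i) to guarantee, for every $w<v$ subject to the hypothesis, the existence of a simple descent $s$ with $vs\ge w$ so the induction can start, and (ii) to verify that the numerical hypothesis descends, i.e.\ $|S(w,vs)|=\ell(vs)-\ell(w)$, so the induction hypothesis is applicable on $[w,vs]$. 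Establishing these two points, and in particular controlling the ``boundary'' terms in which $u$ and $us$ leave the interval $[w,v]$ during the pairing step, is where the real work lies; the Boolean structure of the interval is the tool I would use to carry it out.
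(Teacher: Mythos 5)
First, a point of orientation: the statement you set out to prove is stated in the paper as Conjecture 1 --- it is Bump--Nakasuji's Conjecture 1.2 --- and the paper does \emph{not} prove it. The only thing the paper establishes about it is Proposition 10: the conjecture is \emph{equivalent} to Conjecture 2 (BN Conjecture 1.3), via the identities $m(w,v)=\sum_{w\le z\le v}p(z,v)$ and $\tilde{m}(w,v)=\sum_{w\le z\le v}(-1)^{\ell(v)-\ell(z)}\tilde{p}(w,z)$, the duality of Theorem 1, and the set identity $S'(w,v)=S(vw_0,ww_0)$; the conjecture itself is left open (its resolution is deferred to \cite{NN}). Your opening moves are correct and coincide with what the paper already does: the identity $m(w,v)=\sum_{w\le u\le v}b_{u,v}$, its translation by Theorem 3 into $\sum_{w\le u\le v}p(u,v)|_{t_1=-q^{-1},t_2=1}$, and the base case $w=e$, where $S(e,v)=R(v)$, the hypothesis holds automatically, and the assertion is exactly Corollary 4 (cf.\ Remark 6). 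Up to that point you have reproduced the paper's reduction and its one already-known case; nothing beyond that is proved in the paper, so everything that follows must stand on its own.

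It does not. The central structural claim of your inductive step is false: the hypothesis $|S(w,v)|=\ell(v)-\ell(w)$ does \emph{not} force $[w,v]$ to be a Boolean lattice. For $w=e$ the hypothesis holds for \emph{every} $v$ (as your own base case notes), while lower Bruhat intervals are almost never Boolean --- already $[e,w_0]$ in type $A_2$ has $6$ elements, not $2^3$. So the tool you plan to use for ``the real work'' does not exist. The two verifications you flag are also genuinely problematic rather than routine. (i) A simple right descent $s$ with $vs<v$ and $vs\ge w$ need not exist even under the hypothesis: in type $A_2$ take $w=s_1$, $v=s_2s_1$; then $S(w,v)=\{\alpha_2\}$, so the hypothesis holds, but the unique right descent gives $vs_1=s_2\not\ge s_1$. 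You would be forced to fall back on the left recurrence (Proposition 5), whose coefficients involve the Weyl-twisted terms $s[p(\cdot,\cdot)]$ and so break the uniform pairing and the matching recurrence for the product. (ii) No relation of the form $S(w,vs)=S(w,v)\setminus\{\beta\}$ holds in general, and in the pairing step the terms $p(us,vs)$ with $us<w$ lie outside $[w,vs]$, so the summed recurrence does not collapse to a factor times $m(w,vs)$; controlling precisely these boundary contributions \emph{is} the content of the conjecture, not a technicality. In short, the part of your argument that agrees with the paper is the part the paper proves (Proposition 10's reduction and Corollary 4), and the part that goes beyond it rests on a false lemma and two unproved claims, one of which already fails in rank $2$.
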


\begin{conjecture}(\cite{BN} Conjecture 1.3)
Assume that the root system $R$ is simply-laced. 
Suppose $w\leq v$ and  $|S'(w,v)|=\ell(v)-\ell(w)$, then
$$\tilde{m}(w,v)=
(-1)^{\ell(v)-\ell(w)}\displaystyle\prod_{\alpha\in S'(w,v)}
\frac{1-q^{-1} z^\alpha}{1-z^\alpha}.$$
\end{conjecture}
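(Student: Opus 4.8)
The plan is to deduce this statement for $\tilde{m}(w,v)$ from the companion Conjecture for $m(w,v)$ by exploiting the longest-element duality of Theorem 1, and then to prove the $m$-statement by induction using the recurrences for $p$. First I would pass from the $\psi$-basis back to the Casselman coefficients. Substituting the definition $\psi_v=\sum_{u\geq v}\varphi_u$ into $f_w=\sum_{v\geq w}\tilde{m}(w,v)\psi_v$ and comparing with $f_w=\sum_{v\geq w}a_{w,v}\varphi_v$ gives $a_{w,v}=\sum_{w\leq u\leq v}\tilde{m}(w,u)$; inverting by the Bruhat-order M\"obius function $\mu(u,v)=(-1)^{\ell(v)-\ell(u)}$ yields $\tilde{m}(w,v)=\sum_{w\leq u\leq v}(-1)^{\ell(v)-\ell(u)}a_{w,u}$, and the analogous substitution into $\psi_w=\sum_{v\geq w}\varphi_v$ gives $m(w,v)=\sum_{w\leq u\leq v}b_{u,v}$. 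By Theorem 2, $a_{w,u}=\tilde{p}(w,u)|_{t_1=-q^{-1},t_2=1}$ and $b_{u,v}=p(u,v)|_{t_1=-q^{-1},t_2=1}$, so the duality $\tilde{p}(w,u)=(-1)^{\ell(u)-\ell(w)}p(uw_0,ww_0)$ of Theorem 1 gives $a_{w,u}=(-1)^{\ell(u)-\ell(w)}b_{uw_0,ww_0}$. Substituting and re-indexing by $u'=uw_0$ (which runs over $[vw_0,ww_0]$ as $u$ runs over $[w,v]$) makes the two signs combine to a constant $(-1)^{\ell(v)-\ell(w)}$ and collapses the alternating sum to
$$\tilde{m}(w,v)=(-1)^{\ell(v)-\ell(w)}\,m(vw_0,ww_0).$$

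This reduction is clean because the index sets match exactly: a direct Bruhat-order computation shows $S'(w,v)=\{\alpha:w<s_\alpha w\leq v\}=\{\alpha:vw_0\leq s_\alpha ww_0<ww_0\}=S(vw_0,ww_0)$, while $\ell(v)-\ell(w)=\ell(ww_0)-\ell(vw_0)$, so the smoothness hypothesis $|S'(w,v)|=\ell(v)-\ell(w)$ is literally the hypothesis $|S(vw_0,ww_0)|=\ell(ww_0)-\ell(vw_0)$ of the $m$-statement. Hence the present conjecture follows at once, sign and product included, from the $m$-statement applied to $(vw_0,ww_0)$ (one must only keep the Section 4.5 dictionary $\varphi_w\leftrightarrow\varphi_{w^{-1}}$ between the two papers' conventions in view).

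It therefore remains to prove $m(w,v)=\prod_{\alpha\in S(w,v)}\frac{1-q^{-1}z^\alpha}{1-z^\alpha}$ under $|S(w,v)|=\ell(v)-\ell(w)$, where $m(w,v)=\sum_{w\leq u\leq v}b_{u,v}$ is the sum of the coefficients of $h_u$ over $u\geq w$ in the specialized Yang-Baxter element $Y_v$. Two facts organize the induction on $\ell(v)-\ell(w)$: each factor is the fundamental block of the theory, since $\bigl(1+\frac{t_1+t_2}{E(\alpha)}\bigr)|_{t_1=-q^{-1},t_2=1}=\frac{1-q^{-1}e^\alpha}{1-e^\alpha}$ with $e^\alpha$ identified with $z^\alpha$; and the case $w=e$ is exactly the first formula of Corollary 4, since there $S(e,v)=R(v)$ (so the hypothesis is automatic) and the full sum $\sum_{u\leq v}b_{u,v}$ equals $\prod_{\beta\in R(v)}\frac{1-q^{-1}z^\beta}{1-z^\beta}$. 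For the inductive step I would choose a simple reflection $s$ with $vs<v$ and feed the right recurrence for $p$ (Proposition 6) into the partial sum defining $m(w,v)$, aiming to reproduce the same product with a single factor removed.

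The step I expect to be the genuine obstacle is the combinatorial control of this partial sum under the recurrence. Summing the recurrence over $\{u:w\leq u\leq v\}$ mixes the terms $p(u,\cdot)$ and $p(us,\cdot)$ and shifts the index ranges, so one must show that the hypothesis $|S(w,v)|=\ell(v)-\ell(w)$ is inherited by the shorter intervals produced and that exactly one factor $\frac{1-q^{-1}z^\alpha}{1-z^\alpha}$, indexed by the reflection lost from $S(w,v)$, is peeled off. The equality $|S(w,v)|=\ell(v)-\ell(w)$ is a rational-smoothness (Carrell--Peterson type) criterion on the interval $[w,v]$, which is what forces $[w,v]$ to be chain-uniform enough for this single-factor peeling; the simply-laced hypothesis keeps all relevant reflections and roots on the same footing, so that the bijection between recurrence terms and factors does not degenerate through short/long root interactions. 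Verifying this inheritance is the heart of the argument and where I would expect to spend essentially all the effort, and the $A_2$ computation of $\tilde{p}(\cdot,s_1s_2s_1)$ in the Example provides a useful check on signs and factor counts.
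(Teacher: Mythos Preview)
Your reduction of the $\tilde m$-statement to the $m$-statement is exactly what the paper does: the paper establishes $m(w,v)=\sum_{w\leq z\leq v}p(z,v)$ and $\tilde m(w,v)=\sum_{w\leq z\leq v}(-1)^{\ell(v)-\ell(z)}\tilde p(w,z)$, applies the duality of Theorem~1 to obtain $\tilde m(w,v)=(-1)^{\ell(v)-\ell(w)}m(vw_0,ww_0)$, and then observes $S'(w,v)=S(vw_0,ww_0)$. That is the entire content of Proposition~10, and your derivation of it is correct.

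The key point you have missed is that the paper stops there. It proves only the \emph{equivalence} of Conjectures~1 and~2; it does not prove either conjecture. Both remain conjectures at the end of the paper. So everything from ``It therefore remains to prove\ldots'' onward is an attempt to do something the paper does not do and does not claim to do.

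As for that additional attempt: you correctly identify the obstacle, and it is a real one. The inheritance of the condition $|S(w,v)|=\ell(v)-\ell(w)$ to the subintervals produced by the right recurrence is not automatic, and this is exactly why the statement is a conjecture rather than a corollary. A naive induction on $\ell(v)-\ell(w)$ using Proposition~6 does not close: the partial sum $\sum_{w\leq u\leq v}p(u,v)$ under $v\mapsto vs$ does not simply peel off one factor, because the boundary terms where $u$ and $us$ straddle $w$ do not cancel without further structural input from the smoothness hypothesis. The eventual proofs of these conjectures (due to others, after this paper) rely on geometric characterizations of rational smoothness rather than on the Hecke-algebra recurrences alone, which is consistent with your intuition that this step is ``the heart of the argument.''
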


\begin{proposition}
Conjecture 1.2 and Conjecture 1.3 in \cite{BN} are equivalent.
\end{proposition}

\begin{proof}
We can show $m(w,v)=\displaystyle\sum_{w\leq z\leq v} p(z,v)$ and
$\tilde{m}(w,v)=\displaystyle
\sum_{w\leq z\leq v} (-1)^{\ell(v)-\ell(z)}\tilde{p}(w,z)$.
Then it follows by the Theorem 1 that
$\tilde{m}(w,v)=(-1)^{\ell(v)-\ell(w)} m(v w_0, w w_0)$.
As $S'(w,v)=S(v w_0, w w_0)$ we get 
the desiered conclusion.
\end{proof}

{\bf Acknowledgements}

We would like to thank Shin-ichi Kato for valuable comments on the
first version of this paper.
We also would like to thank the referees for careful reading and point
out the significantly related paper \cite{Che}.
This work was supported in part by JSPS Research Fellowship for Young
Scientists and by Grant-in-Aid for Scientific Research.


\end{document}